\newtheorem{theorem}{Theorem}%[section]
\newtheorem{lemma}[theorem]{Lemma}
\newtheorem{corollary}[theorem]{Corollary}
\newtheorem{claim}[theorem]{Claim}
\newtheoremstyle{note}% <name>
{3pt}% <Space above>
{3pt}% <Space below>
{}% <Body font>
{}% <Indent amount>
{\bf}% <Theorem head font>
{:}% <Punctuation after theorem head>
{.5em}% <Space after theorem headi>
{}% <Theorem head spec (can be left empty, meaning `normal')>
\theoremstyle{note}
\newtheorem{remark}[theorem]{Remark}
\numberwithin{equation}{section}
\begin{document}

\newcommand{\cc}{\mathfrak{c}}
\newcommand{\N}{\mathbb{N}}
\newcommand{\BB}{\mathbb{B}}
\newcommand{\C}{\mathbb{C}}
\newcommand{\Q}{\mathbb{Q}}
\newcommand{\R}{\mathbb{R}}
\newcommand{\Z}{\mathcal{Z}}
\newcommand{\T}{\mathbb{T}}
\newcommand{\st}{*}
\newcommand{\PP}{\mathbb{P}}
\newcommand{\rin}{\right\rangle}
\newcommand{\SSS}{\mathbb{S}}
\newcommand{\forces}{\Vdash}
\newcommand{\dom}{\text{dom}}
\newcommand{\osc}{\text{osc}}
\newcommand{\F}{\mathcal{F}}
\newcommand{\A}{\mathcal{A}}
\newcommand{\B}{\mathcal{B}}
\newcommand{\I}{\mathcal{I}}
\newcommand{\X}{\mathcal{X}}
\newcommand{\Y}{\mathcal{Y}}
\newcommand{\CC}{\mathcal{C}}

\thanks{The first named author was partially supported by the NCN (National Science
Centre, Poland) research grant no.\ 2020/37/B/ST1/02613.}

\subjclass[2010]{46B20, 03E75, 46B26}
\title{Large Banach spaces with no infinite equilateral sets}
\author{Piotr Koszmider}
\address{Institute of Mathematics of the Polish Academy of Sciences,
ul. \'Sniadeckich 8,  00-656 Warszawa, Poland}
\email{\texttt{piotr.koszmider@impan.pl}}

\author{Hugh Wark}
\address{York, North Yorkshire, England}
\email{\texttt{hughwark@yahoo.com}}

\begin{abstract} 
A subset of a Banach space is called equilateral if the distances between any two of its distinct elements are the same. It is proved that there exist non-separable Banach spaces (in fact of density continuum) with no infinite equilateral subset. These examples are strictly convex renormings of $\ell_1([0,1])$. A wider class of renormings of $\ell_1([0,1])$  which admit no uncountable equilateral sets is also considered.
\end{abstract}

\maketitle

\section{introduction}

Let $\X$ be a Banach space and $r>0$ a real. A subset $\Y\subseteq \X$ is called $r$-equilateral if
$\|x-y\|=r$ for any two distinct $x, y\in \Y$; it is called equilateral if it is $r$-equilateral
for some real $r>0$. 

As shown by Brass and by Dekster (\cite{brass, dekster}) for each $k\in \N\setminus\{0,1\}$ 
there is $d(k)\in \N$
such that every normed space of dimension $d(k)$ admits a $k$ element $r$-equilateral set.
However, the smallest value of $d(k)$ is unknown and it is an open conjecture that $d(k)$
 can take the value $k-1$ for each $k\in \N\setminus\{0,1\}$  (\cite{thompson}). 
 
 The above results
 of \cite{brass, dekster} imply that any infinite dimensional Banach space contains arbitrarily
 large finite equilateral sets. In fact,
 by a result of Shkarin (\cite{shkarin}) every finite
 ultrametric space (a metric space where the distance $d$ satisfies $d(x, z)\leq\max(d(x,y), d(y,z))$
 for any points $x, y, z$) isometrically embeds in any infinite dimensional
 Banach space. A surprising result  was obtained by Terenzi who proved
in \cite{terenzi} that there are 
infinite dimensional (separable) Banach
spaces with no infinite equilateral sets (for other spaces with this
property see \cite{glau-inf, terenzi2}). On the other 
hand  Mercourakis and Vassiliadis proved that  
 any Banach space containing
an isomorphic copy of $c_0$ admits an
infinite equilateral set (\cite{mer-pams})  and Freeman,  Odell,  Sari and  Schlumprecht 
proved that every uniformly smooth Banach space admits an infinite equilateral set (\cite{smooth}).
The difference between the example of Terenzi and the latter Banach spaces should be seen not
only in the context of the geometry of Banach spaces but also in the context of
infinite combinatorics, in particular the applicability of Ramsey methods in Banach spaces.

A natural problem if every nonseparable Banach space admits an uncountable 
equilateral set   has been considered in \cite{mer-pams, mer-c, equi}. 
The first named author constructed in \cite{equi}
a consistent example of a nonseparable Banach space 
which does not admit an uncountable equilateral set. It is of the form
$C(K)$, where $K$ is Hausdorff and compact. However,  it was also
proved in \cite{equi}  that it is consistent that no such Banach space
of the form $C(K)$ exists. This showed that 
the problem whether a nonseparable Banach space of the form $C(K)$ 
admits an uncountable equilateral set is undecidable (\cite{equi}). 

The main result of this paper is that there are absolute (under no extra set-theoretic assumption)
examples of nonseparable Banach spaces with no uncountable equilateral sets (necessarily not of
the form $C(K)$).
Moreover, they do not even admit an infinite equilateral sets and have density 
continuum\footnote{We do not know if the density continuum is the maximal possible. The only result bounding the densities of Banach spaces with no uncountable equilateral sets was obtained  by  Terenzi in \cite{terenzi} using essentially an Erd\"os-Rado type argument which is an uncountable version 
of the Ramsey theorem:
if the density of a Banach space $\X$ is bigger than  $2^{(2^\omega)}$, then
$\X$ admits an uncountable equilateral set.}.

Our approach is to transfer some parts of the Terenzi arguments from \cite{terenzi} to the nonseparable
setting. He considered a renorming of $\ell_1$, where the norm is given for any $x\in \ell_1$ by 
$$\|x\|=\|x\|_1+\sqrt{\sum_{i\in \N}{x(i)^2\over 2^i}}.$$
We consider renormings of $\ell_1([0,1])$ where the norm is 
defined for any $x\in \ell_1([0,1])$ by
$$\|x\|_T=\|x\|_1+\|T(x)\|_\X, \leqno  (\odot)$$
where $T:\ell_1([0,1])\rightarrow\X$ is an injective operator into a Banach space $\X$.
Renormings of $\ell_1([0,1])$ similar but different to ours were already employed in e.g., \cite{godun}
to obtain Banach spaces not admitting certain subsets. The foundation of our main result is the following:

\begin{theorem}\label{infinite} Suppose that $\X$ is a Banach space with a strictly convex norm and 
that $T:\ell_1([0,1])\rightarrow \X$ is a compact bounded injective operator.
Then the equivalent renorming $(\ell_1([0,1]), \|\ \|_T)$
of $(\ell_1([0,1]), \|\ \|_1)$ 
admits no infinite equilateral set.
\end{theorem}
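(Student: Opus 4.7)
The plan is to assume for contradiction that $(x_n)_{n \in \N}$ is an infinite $r$-equilateral set in $(\ell_1([0,1]),\|\cdot\|_T)$ with $r > 0$, and to run two subsequence extractions. Translating by $x_0$ one arranges $x_0 = 0$, so $\|x_n\|_T = r$ for $n \geq 1$; in particular $(x_n)$ is bounded in $\ell_1([0,1])$, and since each $x_n$ has countable support we may work in $\ell_1(S) \cong \ell_1$ where $S = \bigcup_n \mathrm{supp}(x_n)$. First, compactness of $T$ lets us pass to a subsequence with $T(x_n) \to y$ in $\X$. Then $T(x_n - x_m) \to 0$ and the identity $\|x_n - x_m\|_1 + \|T(x_n - x_m)\|_\X = r$ forces $\|x_n - x_m\|_1 \to r$; the lower bound $\|x_n - x_m\|_1 \geq r/(1+\|T\|)$ rules out any norm-Cauchy subsequence, so Schur and Rosenthal's $\ell_1$-theorem give a further subsequence equivalent to the unit $\ell_1$-basis. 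Second, weak-$*$ compactness in $\ell_1(S) = c_0(S)^*$ yields $x_n \to x^*$ pointwise, and a gliding hump on $y_n := x_n - x^*$ produces disjointly supported $u_n$ with $\|y_n - u_n\|_1 \to 0$, $\|u_n\|_1 \to r/2$, and $T(u_n) \to y' := y - T(x^*)$.

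Next I would record the quantitative strict convexity of $\|\cdot\|_T$,
$$\tfrac{1}{2}(\|u\|_T + \|v\|_T) - \bigl\|\tfrac{1}{2}(u+v)\bigr\|_T \;\geq\; \tfrac{1}{2}(\|Tu\|_\X + \|Tv\|_\X) - \bigl\|\tfrac{1}{2}(Tu+Tv)\bigr\|_\X,$$
which follows by splitting each side into its $\ell_1$- and $\X$-parts. Because $T$ is injective, $Tu \neq Tv$ whenever $u \neq v$; strict convexity of $\X$ then makes the right-hand side strictly positive unless $Tu, Tv$ are positive scalar multiples of each other. In particular $\|\cdot\|_T$ itself is strictly convex. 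To use this, I would note that $\{T(x_n)\}$ cannot lie on a single ray through $y$: otherwise injectivity of $T$ would place $\{x_n\}$ on a one-dimensional affine line, which admits at most two points at pairwise distance $r$. Hence there exist indices $k, n$ for which $T(x_k) - y$ and $T(x_n) - y$ are not positive scalar multiples of each other, and applying the transfer inequality to $u = x_k - x_m$, $v = x_n - x_m$ with $m \to \infty$ gives a strictly positive constant
$$\epsilon \;=\; \tfrac{1}{2}(\|T(x_k)-y\|_\X + \|T(x_n)-y\|_\X) - \bigl\|\tfrac{1}{2}(T(x_k)+T(x_n)) - y\bigr\|_\X > 0,$$
whence $\bigl\|\tfrac{1}{2}(x_k + x_n) - x_m\bigr\|_T \leq r - \epsilon$ for all sufficiently large $m$.

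The main obstacle is to convert this uniform gap into a contradiction with the exact equilateral identity. A naive triangle estimate such as $r = \|x_m - x_{m'}\|_T \leq 2(r - \epsilon)$ yields only $\epsilon \leq r/2$, which does not contradict anything. Closing the argument requires combining the uniform upper bound with both the exact identities $\|x_l - x_{l'}\|_T = r$ and the asymptotic structure of the disjointly supported $u_n$ coming from the gliding hump, in the spirit of Terenzi's original separable argument, where the concrete weighted-$\ell_2$ geometry forced the disjointly supported tails to vanish under $T$ and so, by injectivity, to be zero. Carrying this sharpening out for a general compact strictly convex target $\X$, rather than the particular $\ell_2$ with summable weights, is the essential technical point.
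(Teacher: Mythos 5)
Your setup (compactness of $T$ to extract $T(x_n)\to y$, the pointwise limit $x^*$, and the asymptotic disjointness of the tails $y_n=x_n-x^*$) matches the paper's, and both your transfer inequality and the observation that the $T(x_n)$ cannot all lie on a single ray through $y$ are correct. But, as you acknowledge, the argument does not close, and the reason is structural rather than technical: your strict-convexity gap $\epsilon$ is measured at the midpoints $\tfrac12(x_k+x_n)$, which are not members of the equilateral set, so the bound $\bigl\|\tfrac12(x_k+x_n)-x_m\bigr\|_T\le r-\epsilon$ contradicts nothing. No sharpening of that particular inequality will produce the contradiction; strict convexity has to be applied to a different pair of vectors.

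The missing idea is an \emph{exact pairwise identity}, not an asymptotic or midpoint estimate. First upgrade your limit $\|y_n\|_1\to r/2$ to the exact statement $\|T(y_m)-y'\|_\X=r/2-\|y_m\|_1$ for \emph{every} $m$: let $n\to\infty$ in the equilateral identity $\|y_n-y_m\|_1+\|T(y_n)-T(y_m)\|_\X=r$, using that $\|y_n-y_m\|_1-\|y_n\|_1-\|y_m\|_1\to 0$ by asymptotic disjointness and that $T(y_n)\to y'$. Adding this equality for $n$ and for $m$ and subtracting the equilateral identity gives, for every pair $n\ne m$,
$$\|y_n\|_1+\|y_m\|_1-\|y_n-y_m\|_1=\|T(y_n)-T(y_m)\|_\X-\|T(y_n)-y'\|_\X-\|T(y_m)-y'\|_\X.$$
The left-hand side is $\ge 0$ and the right-hand side is $\le 0$ by the triangle inequality, so both vanish. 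The vanishing of the right-hand side is the equality case of the triangle inequality for the vectors $T(y_n)-y'$ and $-(T(y_m)-y')$, and strict convexity of $\X$ now forces all the $T(y_n)$ onto a single line through $y'$; injectivity of $T$ then places all the $x_n$ on one affine line in $\ell_1([0,1])$, which cannot contain an infinite $r$-equilateral set. This is how the paper finishes: strict convexity is invoked not at midpoints but at the exact equality case of the triangle inequality for each pair, and that is the step your draft lacks. Once this identity is in hand, your Rosenthal/Schur digression and the explicitly disjointified $u_n$ are not needed.
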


Since there exist operators as in the hypothesis of the above theorem
(Lemma \ref{operators}) we obtain:

\begin{corollary}\label{main} There is a Banach space of density continuum which does not admit
an infinite equilateral set.
\end{corollary}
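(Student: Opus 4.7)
The plan is to apply Theorem \ref{infinite} to the operator produced by Lemma \ref{operators} and then verify that the resulting renormed space has density continuum.

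First, Theorem \ref{infinite} reduces the corollary to the existence of a compact, bounded, injective operator $T\colon\ell_1([0,1])\to\X$ into some strictly convex Banach space $\X$: given such a $T$, the renorming $\|\cdot\|_T$ from $(\odot)$ admits no infinite equilateral set. For the density, the inequalities $\|x\|_1\leq\|x\|_T\leq (1+\|T\|)\|x\|_1$ show that $\|\cdot\|_T$ is equivalent to $\|\cdot\|_1$, so the density of the renorming equals that of $(\ell_1([0,1]),\|\cdot\|_1)$, which is $\cc$ (the canonical unit vectors $\{e_\alpha:\alpha\in[0,1]\}$ are pairwise at $\ell_1$-distance $2$, forcing density at least $\cc$, while finitely supported rational combinations indexed by finite subsets of $[0,1]$ provide a dense set of cardinality exactly $\cc$).

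For the content of Lemma \ref{operators}, the natural candidate is $\X=\ell_2$, which is strictly convex because it is Hilbert. I would define $T$ by $T(e_\alpha)=f(\alpha)$, where $f\colon[0,1]\to\ell_2$ is a continuous curve, e.g.\ $f(\alpha)=(\alpha^n/n!)_{n\geq 0}$, and extend by linear continuity; this extension is legitimate because $\sup_\alpha\|f(\alpha)\|_2<\infty$ yields boundedness of $T$ on $\ell_1([0,1])$. Compactness of $T$ then follows from norm-compactness of $f([0,1])$: the image $T(\text{ball}(\ell_1([0,1])))$ lies in $\overline{\text{conv}}(f([0,1])\cup(-f([0,1])))$, which is compact by Mazur's theorem. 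Injectivity reduces to showing that any countably supported signed ``measure'' $\mu=\sum_n a_n\delta_{\alpha_n}$ on $[0,1]$ with $\sum|a_n|<\infty$ and all moments vanishing must be zero; this is immediate from the density of polynomials in $C([0,1])$.

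The main obstacle is to reconcile compactness with injectivity on the nonseparable domain $\ell_1([0,1])$: since a compact operator has separable range, the continuum many linearly independent directions $\{e_\alpha\}_{\alpha\in[0,1]}$ must be packaged into a separable target through a family $\{f(\alpha)\}$ that is simultaneously relatively compact and linearly independent in the strong sense that no nontrivial countable $\ell_1$-combination of its members vanishes. The moment curve construction above is designed precisely to meet both requirements at once.
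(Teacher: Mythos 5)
Your proposal is correct and follows the paper's decomposition of the corollary exactly: apply Theorem \ref{infinite} to a compact bounded injective operator $T\colon\ell_1([0,1])\to\X$ with $\X$ strictly convex, and check that the equivalent renorming has density $\cc$ (the paper leaves the density verification implicit; your argument via the $2$-separated family $\{e_\alpha\}$ and the rational finitely supported vectors is the right one). Where you genuinely diverge is in the proof of the key Lemma \ref{operators}. The paper takes $\X=\ell_2$ and defines $T(x)=\bigl(x_i^*(x)/2^i\bigr)_i$, where $x_i^*=\chi_{I_i}$ runs over characteristic functions of rational intervals; injectivity comes from totality of this countable family of functionals (a local argument: isolate a coordinate $t$ with $|x(t)|$ dominating the mass of $x$ on a small rational interval around $t$), and compactness comes from approximating $T$ in operator norm by its finite-rank truncations. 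You instead send $e_\alpha$ to a point $f(\alpha)$ on a moment-type curve in $\ell_2$, get compactness from Mazur's theorem applied to $\overline{\mathrm{conv}}\bigl(f([0,1])\cup(-f([0,1]))\bigr)$, and get injectivity from the Weierstrass density of polynomials (all moments of the atomic measure $\sum_n a_n\delta_{\alpha_n}$ vanish, hence the measure is zero). Both routes are valid; the paper's weighting by $2^{-i}$ makes compactness a one-line finite-rank approximation and keeps injectivity elementary, while your version trades that for the slicker Mazur argument at the cost of invoking Stone--Weierstrass and the Riesz representation to kill the atomic measure. Your closing remark correctly identifies the real tension (a compact operator has separable range, so the continuum many directions must be separated by a relatively compact family), which is exactly what both constructions are engineered to resolve.
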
 

In particular, this solves the question of whether there is a nonseparable
Banach space with no uncountable equilateral set (\cite{mer-c, equi}, Problem 293 of  \cite{guirao}).
Another absolute construction of a nonseparable Banach space with no uncountable
equilateral set is being presented at the same time in a paper by the first named author
\cite{kottman}. However, that is a renorming of
a space $C_0(K)$ for $K$ locally compact and scattered, so it is $c_0$-saturated (by \cite{pel}).
Since a result in \cite{mer-pams} says that any Banach space which contains an isomorphic copy
of $c_0$ admits an infinite equilateral set, we conclude that
spaces of \cite{kottman} admit such infinite sets.

By an argument of Terenzi from \cite{terenzi} given any  equilateral set $\Y$ in
a Banach space $\X$ we may assume that it is a $1$-equilateral set by scaling it.
Considering $\{y-y_0: y\in \Y\setminus\{y_0\}\}$ for any $y_0\in \Y$
we may assume that it is a $1$-equilateral set included in the unit sphere of $\X$.
Thus equilateral sets are related to the questions concerning separation of points
in the spheres of Banach spaces (see e.g. \cite{hajek-tams} for references).
Recall that a subset $\Y$ of a Banach space $\X$ is called
$\delta$-separated if $\|y-y'\|\geq \delta$ for all distinct $y, y'\in \Y$.
It is called $(\delta+)$-separated if  $\|y-y'\|> \delta$ for all distinct $y, y'\in \Y$.
By Remark 3.16  \cite{hajek-tams} the unit sphere of every renorming of $\ell_1([0,1])$
 contains  a subset $\Y$ of cardinality continuum such that $\|y-y'\|\geq1+\varepsilon$
 some $\varepsilon>0$ and for every two distinct $y, y'\in \Y$. 
 
 After proving Theorem \ref{infinite} in Section 3 we consider renormings 
 $\|\ \|_T$ of $\ell_1([0,1])$ as in ($\odot$) for any injective $T$  with separable range.
 Some of such renormings  admit many infinite equilateral sets (see Remark \ref{plus-id}). We obtain:
 
\begin{theorem}\label{uncountable}
Suppose that $\X$ is a Banach space and  $T:\ell_1([0,1])\rightarrow \X$ is injective 
and has separable range and $r>0$.
Then $(\ell_1([0,1]), \|\ \|_T)$
 has the following property: Any $r$-separated subset  $\Y\subseteq \ell_1([0,1])$ of regular uncountable cardinality has a subset $\Z\subseteq \Y$ of the same cardinality which is $(r+)$-separated.
In particular $(\ell_1([0,1]), \|\ \|_T)$ does not admit any uncountable equilateral set.
\end{theorem}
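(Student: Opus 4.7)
The plan is to construct $\Z$ by transfinite recursion of length $\kappa$. Enumerating $\Y$ as $\{y_\alpha : \alpha < \kappa\}$, at stage $\alpha$ I select $z_\alpha \in \Y \setminus \{z_\beta : \beta < \alpha\}$ satisfying $\|z_\alpha - z_\beta\|_T \neq r$ for every $\beta < \alpha$. Since $\Y$ is $r$-separated, the relation $\neq r$ automatically upgrades to $>r$, so the resulting $\Z = \{z_\alpha : \alpha < \kappa\}$ is $(r+)$-separated of cardinality $\kappa$. The ``in particular'' clause about equilateral sets follows immediately: any uncountable $r$-equilateral set would be $r$-separated yet contain no two points at distance strictly greater than $r$, contradicting the main assertion applied to itself.

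The recursion proceeds provided the following key lemma: for every $y_0 \in \ell_1([0,1])$, the ``tight sphere'' $S_{y_0} := \{y \in \Y : \|y - y_0\|_T = r\}$ is at most countable. Given the lemma, at stage $\alpha < \kappa$ the forbidden set $\{z_\beta : \beta < \alpha\} \cup \bigcup_{\beta < \alpha} S_{z_\beta}$ has cardinality at most $|\alpha| \cdot \aleph_0 < \kappa$ by the regularity of $\kappa$, so a valid $z_\alpha$ always exists in $\Y$.

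To prove the key lemma, I argue by contradiction. Suppose $S = S_{y_0}$ is uncountable. Since $T$ has separable range, $\{T(y - y_0) : y \in S\}$ is an uncountable subset of a separable Banach space, so I can extract a sequence of distinct $y_n \in S$ with $T(u_n) \to q \in \X$ for some $q$, writing $u_n := y_n - y_0$. From $\|u_n\|_1 + \|T(u_n)\|_\X = r$ I obtain $\|u_n\|_1 \to r - c$ with $c := \|q\|_\X$; from the $r$-separation of $\Y$ together with $T(u_n - u_m) \to 0$ I obtain $\liminf_{n \neq m} \|u_n - u_m\|_1 \geq r$; and the $\ell_1$-triangle inequality $\|u_n - u_m\|_1 \leq \|u_n\|_1 + \|u_m\|_1$ then forces $r \leq 2(r - c)$, i.e.\ $c \leq r/2$. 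In the critical case $c = r/2$ the inequalities become asymptotically tight, which forces the $u_n$ to be pairwise asymptotically antipodal in $\ell_1$: their supports $\text{supp}(u_n) \subseteq [0,1]$ become essentially pairwise disjoint. The contradiction is then extracted from combining this ``support dispersion'' with the convergence $T(u_n) \to q$, exploiting that $T$ is injective (so that the basis images $\{T(e_t) : t \in [0,1]\}$ are all distinct) yet has separable range.

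The main obstacle is closing the contradiction in the critical case $c = r/2$. The heart of the matter is reconciling the asymptotic disjointness of the supports (which spreads the mass of the $u_n$ over uncountably many coordinates of $[0,1]$) with the clustering of the weighted sums $T(u_n) = \sum_{t \in \text{supp}(u_n)} u_n(t)\, T(e_t)$ near $q$ in the separable range of $T$. I expect this to require a combinatorial analysis in $\ell_1([0,1])$ — potentially a $\Delta$-system style refinement of the supports combined with a careful geometric estimate using injectivity of $T$ — and this is the technical core of the argument.
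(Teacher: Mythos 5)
Your reduction to the ``tight sphere'' lemma fails because that lemma is false for the spaces in question. Rescale so that $\|T\|\leq 1$ (this changes neither injectivity nor separability of the range, and the rescaled operator still satisfies the hypotheses of the theorem). Put $y_0=0$ and, for $t\in[0,1]$, let $y_t=r\,e_t/\|e_t\|_T$, where $e_t\in\ell_1([0,1])$ is the unit vector supported at $t$, so that $\|e_t\|_T=1+\|T(e_t)\|_\X\leq 2$. Then $\|y_t-y_0\|_T=r$ for every $t$, while for $t\neq t'$ the disjointness of supports gives $\|y_t-y_{t'}\|_T\geq\|y_t\|_1+\|y_{t'}\|_1=r/\|e_t\|_T+r/\|e_{t'}\|_T\geq r$. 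Hence $\Y=\{0\}\cup\{y_t:t\in[0,1]\}$ is $r$-separated and $S_{y_0}$ has cardinality continuum, so the forbidden set can have full cardinality already at the first step of your recursion. Your own sketch of the lemma signals the problem: you derive $c\leq r/2$ but obtain no contradiction whatsoever when $c<r/2$ (the example above is exactly the case $c=0$), and you acknowledge that you cannot close the critical case $c=r/2$ either. No argument of the form ``each exact-distance sphere meets $\Y$ countably often'' can work here.

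The paper's proof is organized around a genuinely weaker combinatorial statement and never needs countability of spheres. Given an $r$-separated $\Y=\{x_\alpha:\alpha<\kappa\}$, it runs a closing-off argument (the function $M$ indexed by rational approximations to $T(x_\alpha)$, to a finite piece $x_\alpha|F$, and to the mass of the remainder) to produce an unbounded set $C\subseteq\kappa$ of closure points $\delta$ with the key property (Claim \ref{model}) that every nonzero tail $x_\alpha|\{t_\xi:\delta\leq\xi<\kappa\}$ has $\ell_1$-norm at least $r/2$; this is precisely where the separability of the range of $T$ enters. It then extracts $\kappa$ many vectors whose tails are pairwise disjointly supported and either concludes $(r+)$-separation directly from disjointness together with the strict inequality $\|x\|_T>\|x\|_1$ for $x\neq 0$ (injectivity of $T$), or applies the Dushnik--Miller theorem, where it suffices to exclude a $0$-monochromatic set of order type $\omega+1$ in the refined family --- a far weaker demand than excluding an uncountable one around a single center. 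To repair your approach you would need to replace the false lemma by a statement of this Ramsey-theoretic kind, at which point you are essentially reconstructing the paper's argument.
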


The above property  for renormings induced by
 $T$ injective compact and with strictly convex range is a  consequence of Theorem \ref{infinite} and
 some  partition calculus results
 (see Corollary \ref{dushnik}). Also this property is much stronger
 than not having uncountable equilateral sets (see Remark \ref{sierpinski}).
 
 A close link
 between separated subsets in the sphere and Auerbach bases
 was  demonstrated in \cite{hajek-tams}. In fact Godun's renorming of $\ell_1([0,1])$
was designed to prove that the space has no fundamental Auerbach system  (\cite{godun}).
Nevertheless, we do not know if our spaces admits an uncountable Auerbach system.

Let us also remark that considering Banach spaces without large equilateral sets which have renormings
admitting large equilateral sets (obviously $\ell_1([0,1])$ admits equilateral set of
cardinality continuum in the standard norm) is sometimes necessary to obtain
examples of the former kind. For
example, by a result of Swanepoel (\cite{swan}) any infinite dimensional
Banach space has an equivalent renorming which admits an infinite equilateral set (see also
\cite{mer-pams}). Moreover by the results  of \cite{mer-pams} the existence
of a biorthogonal system of cardinality $\kappa$ in a Banach space $\X$  implies
the existence of an equivalent renorming of $\X$ which admits equilateral set of cardinality $\kappa$.
This means by a result of Todorcevic (\cite{stevo-biorth}) that it is consistent
that every nonseparable Banach space has an equivalent renorming which admits an uncountable
equilateral set. We do not know however if it is consistent that there is
a nonseparable Banach space without an equivalent renorming which admits uncountable
equilateral sets. The densities of such an example could not exceed  continuum
(By a result of W. Johnson that any Banach space of density
bigger than continuum admits an uncountable biorthogonal system cf. Theorem 2.1 of \cite{sur}).
If at all possible, the construction  for density equal to any consistent value 
of the continuum  would not be easy, as the examples in the literature of Banach spaces 
which do not admit uncountable biorthogonal sets have reached only the density $\omega_2$ so far
(\cite{christina}).
Note also that it remains open if there are (even consistent) Banach spaces
(or even renormings of $\ell_1(\kappa)$) of densities in the interval $(2^\omega, 2^{(2^\omega)}]$ which do not admit
infinite or uncountable equilateral sets.

\section{Preliminaries and notation}\label{prelim}

The notation and terminology are standard.  The notation $A^B$ represents the set of all functions form
a set $B$ into a set $A$. Given a set $A$ by $[A]^2$ we mean the collection of all two-element
subsets of $A$. When $f:[A]^2\rightarrow B$, we say that $A'\subseteq A$ is $b$-monochromatic
for $b\in B$ if $f[[A']^2]=\{b\}$; a set is called monochromatic if it is $b$-monochromatic for some 
$b\in B$. The symbols $\omega_1$, $\omega_2$
denote the first and the second uncountable cardinals respectively. The
set of all natural numbers and the set of all rational numbers are
denoted by $\N$ and $\Q$ respectively.

All Banach spaces considered here are over the reals.
Whenever $(\X, \|\ \|_\X)$ is a Banach space  
we will refer only to $\X$  if $\|\ \|_\X$ is clear from the context.
We will consider norms
$\|\ \|_1$ and $\|\ \|_2$ defined as usual for a real sequence $(x(i))_{i\in \N}$ by
$\|x\|_1=\sum_{i\in \N} |x(i)|$ and $\|x\|_2=\sqrt{\sum_{i\in \N} x(i)^2}$.
By $\ell_1(A)$ for a set $A$ we mean all functions $x\in\R^{A}$ such that
$\|x\|_1<\infty$ with $\| \ \|_1$ norm and by $\ell_2$ as all functions
$x\in\R^\N$ such that $\|x\|_2<\infty$ with $\|\ \|_2$ norm.
The dual space to $\ell_1([0,1])$ is $\ell_\infty([0,1])$
together with the action 
$$\langle\phi, x\rangle=\sum_{t\in [0,1]}\phi(t)x(t)$$
for any $\phi\in \ell_\infty([0,1]$ and $x\in \ell_1([0,1])$.
By a support of $x\in \ell_1(A)$ we mean $\{a\in A: x(a)\not=0\}$; it is denoted $supp(x)$.
If $x\in \ell_1(A)$ and $B\subseteq A$, then by $x|B$ we mean the coordinatewise product
of $x$ and the characteristic function of $B$. 

Recall that a Banach  space $(\X, \|\ \|_\X)$ is strictly convex
if $\|x+y\|=\|x\|+\|y\|$ implies that there is $\lambda>0$ such that $x=\lambda y$
for any $x\not=0\not=y$. It is well known that the norm on $\ell_2$ is strictly convex.
We say that two norms $\|\ \|$ and $\|\ \|'$ on a Banach space $\X$
are equivalent if there are constants $c, C>0$ such that
$c\|x\|\leq\|\ x\|'\leq C\|\  x\|$ for every $x\in \X$. This is equivalent to the fact that
the identity is an isomorphism between $(\X, \|\ \|)$ and $(\X, \|\ \|')$.

\begin{lemma} Suppose that $\X, \Y$ are Banach spaces and $T: \X\rightarrow \Y$
is a bounded linear operator. Then the norm $\|\ \|_T$ on $\X$ given by
$\|x \|_\X+\|T(x)\|_\Y$ for $x\in \X$ is equivalent to the norm $\|\ \|_\X$.
If $T$ is injective and $\Y$ is strictly convex, then $\|\ \|_T$ is strictly convex.
\end{lemma}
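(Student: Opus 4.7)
The plan is to handle the two claims separately. For the equivalence of norms, I would use the bounded linearity of $T$ directly: since $\|T(x)\|_\Y \leq \|T\|\cdot\|x\|_\X$ and $\|T(x)\|_\Y \geq 0$, one immediately gets the sandwich $\|x\|_\X \leq \|x\|_T \leq (1+\|T\|)\|x\|_\X$ for all $x \in \X$. This takes only a line and requires no hypothesis on $T$ beyond boundedness.

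For strict convexity of $\|\cdot\|_T$, I would start from nonzero $x, y \in \X$ satisfying $\|x+y\|_T = \|x\|_T + \|y\|_T$ and decompose this as
$$\|x+y\|_\X + \|T(x+y)\|_\Y = \bigl(\|x\|_\X+\|y\|_\X\bigr) + \bigl(\|T(x)\|_\Y+\|T(y)\|_\Y\bigr).$$
The triangle inequality in $\X$ bounds the first summand on the left by the first on the right, and the triangle inequality in $\Y$ applied to $T(x)+T(y) = T(x+y)$ bounds the second summand on the left by the second on the right. Since the totals agree, both individual inequalities must be equalities. In particular,
$$\|T(x)+T(y)\|_\Y = \|T(x)\|_\Y + \|T(y)\|_\Y.$$

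Now the role of injectivity enters: it guarantees $T(x), T(y) \neq 0$, so strict convexity of $\Y$ supplies $\lambda > 0$ with $T(x) = \lambda T(y)$, hence $T(x-\lambda y)=0$, and injectivity of $T$ yields $x = \lambda y$, as required. The only subtlety worth noting is that the equality case in the $\X$-triangle inequality is not used at all, which is why no convexity assumption on $\X$ itself is needed; all the strict-convexity information is pulled back from $\Y$ through the injectivity of $T$.
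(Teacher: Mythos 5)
Your proof is correct and follows essentially the same route as the paper: the same sandwich inequality for norm equivalence, and the same splitting of the equality $\|x+y\|_T=\|x\|_T+\|y\|_T$ into two triangle-inequality equalities, with injectivity used first to ensure $T(x),T(y)\neq 0$ and then to pull $T(x)=\lambda T(y)$ back to $x=\lambda y$. Your closing observation that the equality case in $\X$ is never exploited is accurate but does not change the argument.
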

\begin{proof} We have 
$$\|x\|_\X\leq \|x\|_\X+\|T(x)\|_\Y\leq (1+\|T\|)\|x\|.$$
For strict convexity suppose that 
$\|x+y\|_T=\|x\|_T+\|y\|_T$ for some nonzero $x, y\in \X$. So
$\|x+y\|_\X+\|T(x+y)\|_\Y=\|x\|_\X+\|y\|_\X+\|T(x)\|_\Y+\|T(y)\|_\Y$. By the triangle 
inequality this means that $\|x+y\|_\X=\|x\|_\X+\|y\|_\X$
and $\|T(x+y)\|_\Y=\|T(x)\|_\Y+\|T(y)\|_\Y$. The injectivity of $T$ yields $T(x)\not=0\not=T(y)$.
The strict convexity of $\Y$ yields
$\lambda>0$ such that $T(x)=\lambda T(y)$. The injectivity of $T$ gives 
$x=\lambda y$.

\end{proof}

Let $(I_i)_{i\in \N}$ be an enumeration of all subintervals
of $[0,1]$ with rational end-points. We define $x^*_i=\chi_{I_i}\in \ell_\infty([0,1])$,
where $ \chi_{I_i}$ is the characteristic function of $I_i$. 
Given a nonzero $x\in \ell_1([0,1])$ we find $t\in [0,1]$ such that
$x(t)\not=0$ and an open interval $I_i\ni t$ such that
$\sum\{|x(t')|: t'\in I_i\setminus \{t\}\}<|x(t)|$. Then $\langle x^*_i, x\rangle\not=0$.
This shows that $\{x^*_i: i\in \N\}$ is total for $\ell_1([0,1])$ i.e.,
$x_i^*(x)=0$ for each $i\in I$ implies that $x=0$.
Observe that $\|x^*_i\|=1$ for each $i\in \N$.

\begin{lemma}\label{operators} There is a bounded compact injective operator
$T:\ell_1([0,1])\rightarrow \ell_2$.
\end{lemma}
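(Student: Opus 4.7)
The plan is to use the total sequence $\{x^*_i : i \in \N\} \subseteq \ell_\infty([0,1])$ constructed just above the statement and turn it into an operator into $\ell_2$ by weighting its coordinates with a square-summable sequence of positive scalars. Concretely, fix any sequence $(a_i)_{i\in\N}$ of positive reals with $\sum_{i\in\N} a_i^2 < \infty$ (for instance $a_i = 2^{-i}$) and define
\[
T : \ell_1([0,1]) \to \ell_2, \qquad T(x) = \bigl( a_i \langle x^*_i, x\rangle \bigr)_{i\in\N}.
\]

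First I would check that $T$ maps into $\ell_2$ and is bounded. Since $\|x^*_i\|_{\ell_\infty([0,1])} = 1$ we have $|\langle x^*_i, x\rangle| \leq \|x\|_1$ for every $i$, and hence
\[
\|T(x)\|_2^2 = \sum_{i\in\N} a_i^2 \langle x^*_i, x\rangle^2 \leq \|x\|_1^2 \sum_{i\in\N} a_i^2,
\]
so $\|T\| \leq \bigl(\sum_i a_i^2\bigr)^{1/2}$. Injectivity is immediate from the fact, verified in the paragraph preceding the lemma, that $\{x^*_i : i\in\N\}$ is total: if $T(x)=0$ then $\langle x^*_i, x\rangle = 0$ for every $i$, so $x=0$.

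The last, and mildly more interesting, point is compactness. I would exhibit $T$ as the operator norm limit of its finite-rank truncations
\[
T_n(x) = \sum_{i<n} a_i \langle x^*_i, x\rangle e_i,
\]
where $(e_i)_{i\in\N}$ is the standard basis of $\ell_2$. Each $T_n$ has finite-dimensional range and is therefore compact, and the same inequality as above gives
\[
\|(T-T_n)(x)\|_2^2 = \sum_{i\geq n} a_i^2 \langle x^*_i, x\rangle^2 \leq \|x\|_1^2 \sum_{i\geq n} a_i^2,
\]
so $\|T - T_n\| \to 0$ as $n\to\infty$. Hence $T$ is a norm-limit of compact operators and is itself compact. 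There is no real obstacle here: the only choice one needs to make is the decay rate of the weights $(a_i)$, and any square-summable choice works for boundedness and compactness simultaneously while preserving the injectivity inherited from the totality of $\{x^*_i\}$.
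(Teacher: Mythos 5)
Your proposal is correct and follows essentially the same route as the paper: the paper takes the specific weights $a_i = 2^{-i}$, defines $T(x)=\bigl(x^*_i(x)/2^i\bigr)_{i\in\N}$, derives injectivity from the totality of $\{x^*_i\}$, and obtains compactness by approximating $T$ in operator norm by its finite-rank truncations, exactly as you do. Your version merely makes explicit that any square-summable weight sequence works and spells out the norm estimates; there is no substantive difference.
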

\begin{proof}
Define  $T$ by
$$T(x)=\Big({{x^*_i(x)}\over{2^i}}\Big)_{n\in \N}$$
for any $x\in \ell_1([0,1])$. As $x^*_i$s form a total set,
the operator is injective.  It is also clear that the values of $T$ are in $\ell_2$
and the operator is bounded with its norm $\sqrt2$, as $\|x_i^*\|=1$ or each $i\in \N$.
For the compactness, use again the fact that the norms of $x_i^*$s are $1$
and so $T$ can be approximated in the operator norm by finite rank operators
which are $T$ up to the $k$-th coordinate and later $0$ for $k\in \N$.
As compact operators form a closed ideal this proves the compactness of $T$.
\end{proof}

\section{Proof of the main result}

\noindent{\bf Theorem 1.} {\it Suppose that $\X$ is a Banach space with a strictly convex norm and 
that $T:\ell_1([0,1])\rightarrow \X$ is a compact bounded injective operator.
Then the equivalent renorming $(\ell_1([0,1]), \|\ \|_T)$
of $(\ell_1([0,1]), \|\ \|_1)$ 
admits no infinite equilateral set.}

\begin{proof}
Suppose that $\{x_n: n\in \N\}$ is equilateral in $\ell_1([0,1])$
with the norm $\|\ \|_T$. We will derive a contradiction.
 By scaling it, we may assume that it is $1$-equilateral.
As the supports of $x_n$'s are countable, they are all included in some countable
$A\subseteq [0,1]$.  So we need to prove that the corresponding renorming of the
separable $\ell_1(A)$ does not admit an infinite equilateral set.

By the compactness of $T$,  passing to a subsequence we may assume that 
$\{T(x_n): n\in \N\}$ converges in the norm $\|\ \|_\X$ to $z\in \X$. 
As the range of $T$ is not closed, $z$ does not need to belong to it.
Since we work now with
separable $\ell_1(A)$ and  $(x_n)_{n\in \N}$ is bounded, 
by passing to a subsequence we may assume that $(x_n)_{n\in \N}$ 
converges pointwise to $y\in \ell_1(A)$, that is for every $t\in A$
the sequence $(x_n(t))_{n\in \N}$ converges to $y(t)$. 

Let $x_n'=x_n-y$ for every $n\in \N$.
Then $\|x_n'-x_m'\|_T=\|x_n-x_m\|_T=1$ for all distinct $n, m\in \N$ and  for every $t\in A$
the sequence $(x_n'(t))_{x\in \N}$ converges to $0$.
Moreover
$(T(x_n'))_{n\in \N}$ converges in the norm $\|\ \|_\X$ to $z'=z-T(y)$.

Fix $m\in \N$ and $\varepsilon>0$. Choose a finite $F\subseteq A$ such
that $\|x_m'|F-x_m'\|_1<\varepsilon/4$. As $(x_n'(t))_{n\in \N}$ converges to $0$,
for each $t\in F$, for sufficiently large $n\in \N$ we have $\|x_n'|F\|_1\leq\varepsilon/4$ and so
we have
\begin{itemize}
\item $\|x_n'\|_1\leq \|x_n'-x_n'|F\|_1+\varepsilon/4$,
\item $|x_m'\|_1\leq \|x_m'|F\|_1 +\varepsilon/4$,
\item $\|x_n'-x_m'\|_1\geq \|(x_n'-x_n'|F)- x_m'|F\|-\varepsilon/2$
\item $\|(x_n'-x_n'|F)-x_m'|F\|_1=\|(x_n'-x_n'|F)\|_1+\|x_m'|F\|_1$ as these vectors
have disjoint supports,
\end{itemize}
so we obtain
$$\|x_n'\|_1+\|x_m'\|_1-\|x_n'-x_m'\|_1\leq$$
$$\leq \|x_n'-x_n'|F\|_1+\|x_m'|F\|_1-\|(x_n'-x_n'|F)- x_m'|F\|+\varepsilon=\varepsilon.$$
So for every $m\in \N$ we have 
$$lim_{n\rightarrow \infty}\big(\|x_n'\|_1+\|x_m'\|_1-\|x_n'-x_m'\|_1\big)=0.\leqno (*)$$
For $n\in \N$ define
$$c_n=1/2-\|x_n'\|_1.$$

\begin{claim}\label{one-half}
$\lim_{n\rightarrow\infty}c_n=0$.
\end{claim}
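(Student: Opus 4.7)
The plan is to extract a limit relation for $\|x_n'\|_1$ by combining $(*)$ with the $T$-norm equation, and then push $m \to \infty$ to pin down the value of the limit.

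First, I would rewrite the equilateral condition $\|x_n' - x_m'\|_T = 1$ as
\[
\|x_n' - x_m'\|_1 = 1 - \|T(x_n') - T(x_m')\|_\X.
\]
Since $T(x_n') \to z'$ in the norm of $\X$, for each fixed $m$ we have $\|T(x_n') - T(x_m')\|_\X \to \|z' - T(x_m')\|_\X$ as $n \to \infty$. Hence
\[
\lim_{n \to \infty} \|x_n' - x_m'\|_1 = 1 - \|z' - T(x_m')\|_\X.
\]

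Next, I would plug this into $(*)$. Property $(*)$ says that for each fixed $m$, $\lim_{n \to \infty}(\|x_n'\|_1 + \|x_m'\|_1 - \|x_n' - x_m'\|_1) = 0$, so the limit $L := \lim_{n \to \infty} \|x_n'\|_1$ exists and satisfies
\[
L = 1 - \|x_m'\|_1 - \|z' - T(x_m')\|_\X \qquad \text{for every } m \in \N. \quad (**)
\]

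Finally, I would take $m \to \infty$ in $(**)$. By the definition of $L$ we have $\|x_m'\|_1 \to L$, and since $T(x_m') \to z'$ in $\X$ we have $\|z' - T(x_m')\|_\X \to 0$. Passing to the limit in $(**)$ yields $L = 1 - L - 0$, so $L = 1/2$. Therefore $c_n = 1/2 - \|x_n'\|_1 \to 0$, which is exactly the claim.

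There is no serious obstacle: the argument is a short manipulation combining the compactness-based norm convergence $T(x_n') \to z'$ with $(*)$. The only thing to watch is that the limit $L$ is obtained first by fixing $m$ and letting $n \to \infty$, and only afterwards one is allowed to let $m \to \infty$ on the right-hand side of $(**)$ to close the equation $L = 1 - L$.
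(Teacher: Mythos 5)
Your proof is correct, and it takes a more direct route than the paper's. The paper argues by contradiction: it extracts a subsequence along which $c_n\to c$, and rules out $c>0$ using the triangle inequality together with the norm convergence of $(T(x_n'))_n$ (getting $\|x_n'-x_m'\|_T<1$), and rules out $c<0$ using $(*)$ (getting $\|x_n'-x_m'\|_T>1$); it never computes the actual limits of $\|x_n'-x_m'\|_1$. You instead turn the equilateral condition into the exact identity $\|x_n'-x_m'\|_1=1-\|T(x_n')-T(x_m')\|_\X$, pass to the limit in $n$ to see that $L=\lim_n\|x_n'\|_1$ exists and equals $1-\|x_m'\|_1-\|z'-T(x_m')\|_\X$ for every $m$, and then let $m\to\infty$ to close the equation $L=1-L$. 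Both arguments rest on the same two ingredients established just before the Claim (the relation $(*)$ and the convergence $T(x_n')\to z'$ obtained from compactness), so there is no circularity; but your version buys a little more: it shows directly that the full sequence $\|x_n'\|_1$ converges (no subsequence extraction needed), and the identity $L=1-\|x_m'\|_1-\|z'-T(x_m')\|_\X$ with $L=1/2$ is precisely the paper's later equation $(***)$, $\|z'-T(x_m')\|_\X=c_m$, which the paper has to rederive separately after the Claim.
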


\noindent{\it Proof of the Claim:}
Since the sequence of $c_n$s is bounded (as the $x_n'$s form an equilateral set), 
by passing to a subsequence we may assume that it is
converging to $c$. First suppose that $c>0$. Let $k\in \N$ and $\varepsilon>0$
be such that $\|x_n'\|<1/2-\varepsilon$ for all $n>k$. Then by passing to a subsequence
we may assume that $\|T(x_n')-T(x_m')\|_\X\leq\varepsilon$ for all $n, m\in \N$ as
$(T(x_n'))_{n\in \N}$ converges to $z'$ in $\X$. Fixing $m>k$
by the triangle inequality we obtain
$$\|x_n'-x_m'\|_T=\|x_n'-x_m'\|_1+\|T(x_n'-x_m')\|_\X \leq\|x_n'\|_1+\|x_m'\|_1+ \|T(x_n')-T(x_m')\|_\X<$$
$$2(1/2-\varepsilon)+\varepsilon\leq 1$$
contradicting the fact that $x_n'$s form a $1$-equilateral set.

Now suppose that $c<0$. Let $k\in \N$ and $\varepsilon>0$
be such that $\|x_n'\|>1/2+\varepsilon$ for all $n>k$. Fixing $m>k$
by (*) we may find $n\in \N$ such that $\|x_n'\|_1+\|x_m'\|_1-\|x_n'-x_m'\|_1\leq\varepsilon$.
So
$$\|x_n'-x_m'\|_T>\|x_n'-x_m'\|_1=(\|x_n'-x_m'\|_1-\|x_n'\|_1-\|x_m'\|_1)+(\|x_n'\|_1+\|x_m'\|_1)> $$
$$-\varepsilon+2(1/2+\varepsilon)\geq 1$$
contradicting the fact that $x_n'$s form a $1$-equilateral set. This completes the proof of the claim.

For distinct $m, n\in \N$ we have
$$\|x_n'-x_m'\|_T=\|x_n'-x_m'\|_1+\|T(x_n')-T(x_m')\|_\X=1,$$
so subtracting $1=1/2+1/2=(\|x_n'\|_1+c_n)+(\|x_m'\|_1+c_m)$ from both sides of the second equality, we get
$$-c_n-c_m-\|x_n'\|_1-\|x_m'\|_1+\|x_n'-x_m'\|_1+\|T(x_n')-T(x_m')\|_\X=0.\leqno (**)$$
Fixing any $m\in\N$, by going to infinity with $n\in\N$ by (*) and Claim \ref{one-half} we obtain
$$\|z'-T(x_m')\|_\X=c_m.\leqno (***)$$
Defining $u_m=T(x_m')-z'$ and combining (**) and (***) we obtain for distinct $n,m\in \N$
$$\|x_n'-x_m'\|_1-\|x_n'\|_1-\|x_m'\|_1=\|u_n\|_\X+\|u_m\|_\X-\|u_n-u_m\|_\X.$$
By the triangle inequality the right hand side of the above is non-negative while the left hand side
is non-positive which implies that both of the expressions are equal to zero. In
particular, the left hand side is zero.

Note that $u_n\not=u_m$ for distinct $n, m\in \N$ because otherwise we would have
$T(x_n')=T(x_m')$, which implies $x_n'=x_m'$ by the injectivity of $T$ and this contradicts 
the fact that $x_n'$s form a $1$-equilateral set. So at most one $u_n$ can be zero.
By passing to an infinite subset we may assume that all are nonzero, so that we can
apply the definition of the strict convexity.

By the strict convexity of the norm in $\X$ we obtain $\lambda_{m, n}>0$
such that $u_m=\lambda_{m, n} u_n$.  If $\lambda_{m, n}=1$ for some distinct $m, n\in \N$,
then $T(x_n')=T(x_m')$ which contradicts the injectivity of $T$. 
Otherwise $(z'-T(x_m'))=\lambda_{m, n}(z'-T(x_n'))$ which gives
$z'(1-\lambda_{m, n})=-\lambda_{m, n}T(x_n')+T(x_m')$ and so
$$z'=T\Big( {1\over{1-\lambda_{m, n}}}\big(  x_m'-\lambda_{m, n}x_n'\big)\Big)$$
By the injectivity of $T$ this means for any distinct $k, l\in \N$
$${1\over{1-\lambda_{m, n}}}\big(  x_m'-\lambda_{m, n}x_n'\big)={1\over{1-\lambda_{k, l}}}\big(  x_k'-\lambda_{m, n}x_l'\big),$$
 in particular that 
 $$x_m'={{1-\lambda_{m, n}}\over{1-\lambda_{k, l}}}\big(  x_k'-\lambda_{m, n}x_l'\big)+\lambda_{m, n}x_n'.$$
That means that $\{x_n': n\in \N\}$ spans a two dimensional space. However
 such a space cannot admit an infinite equilateral set as its ball is compact and so any bounded sequence
 has a convergent subsequence. This is the required contradiction.

\end{proof}

\begin{corollary}\label{dushnik}
 Suppose that $\X$ is a Banach space with a strictly convex norm and 
that $T:\ell_1([0,1])\rightarrow \X$ is a compact bounded injective operator.
Then the equivalent renorming $(\ell_1([0,1]), \|\ \|_T)$
of $(\ell_1([0,1]), \|\ \|_1)$ has the following property: Any infinite  $r$-separated  subset
$\Y\subseteq \ell_1([0,1]$  has a subset $\Z\subseteq \Y$ of the same cardinality which is $(r+)$-separated.
\end{corollary}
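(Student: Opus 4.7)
The plan is to reduce the statement to Theorem \ref{infinite} via a standard two-coloring of pairs from $\Y$. Color $\{y,y'\}\in [\Y]^2$ by $0$ if $\|y-y'\|_T=r$ and by $1$ if $\|y-y'\|_T>r$; because $\Y$ is $r$-separated, these are the only possibilities. Any $0$-monochromatic subset of $\Y$ is then $r$-equilateral, and any $1$-monochromatic subset is $(r+)$-separated. So the task is to produce a $1$-monochromatic subset of the same cardinality as $\Y$, while ruling out arbitrarily large $0$-monochromatic sets.

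If $\Y$ is countably infinite, I would apply the classical infinite Ramsey theorem: there is an infinite monochromatic subset $\Z$. A $0$-monochromatic infinite set would be an infinite equilateral set in $(\ell_1([0,1]),\|\ \|_T)$, which is forbidden by Theorem \ref{infinite}. Hence $\Z$ must be $1$-monochromatic, and gives the desired $(r+)$-separated set of the same (countably infinite) cardinality as $\Y$.

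If $|\Y|=\kappa$ is uncountable, I would invoke the Erd\H{o}s--Dushnik--Miller partition relation
$$\kappa\to(\kappa,\omega)^2,$$
valid for every infinite cardinal $\kappa$. Applied to the above coloring (with color $1$ playing the role of the ``large'' side and color $0$ the ``small'' side), it yields either a $1$-monochromatic subset of cardinality $\kappa$, which is exactly the $\Z$ we want, or a $0$-monochromatic countably infinite subset. The latter is again an infinite $r$-equilateral set, contradicting Theorem \ref{infinite}.

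There is no real obstacle beyond citing the correct partition calculus fact: the work is entirely concentrated in Theorem \ref{infinite}, which already forbids infinite equilateral subsets, and the Erd\H{o}s--Dushnik--Miller theorem packages precisely the cardinal arithmetic needed to upgrade ``no infinite $0$-monochromatic subset'' to ``full-cardinality $1$-monochromatic subset.'' The only mild subtlety is that the statement does not assume $|\Y|$ regular, but $\kappa\to(\kappa,\omega)^2$ holds for all infinite $\kappa$, so the argument covers both the singular and regular uncountable cases uniformly together with the countable case handled by Ramsey.
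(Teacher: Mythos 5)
Your proposal is correct and follows essentially the same route as the paper: the same two-coloring of $[\Y]^2$, Ramsey's theorem for the countable case, and the Dushnik--Miller partition relation for the uncountable case, with Theorem \ref{infinite} ruling out infinite $0$-monochromatic sets. Your explicit remark that $\kappa\to(\kappa,\omega)^2$ holds for all infinite $\kappa$ (so singular cardinalities are also covered) is a small point of extra care, since the paper's proof phrases its case split in terms of regular uncountable cardinality while quoting the general statement.
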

\begin{proof} Define a function $c: [\Y]^2\rightarrow\{0,1\}$
by putting $c(\{y, y'\})=0$ if and only if $\|y-y'\|_T=r$.
First consider the case when $\Y$ is countable.
By Ramsey's (Problem 24.1 of \cite{komjath})
theorem there is an infinite  monochromatic subset of $\Y$. It cannot be
$0$-monochromatic as $(\ell_1([0,1]), \|\ \|_T)$ has no infinite equilateral 
sets by Theorem \ref{infinite}. A $1$-monochromatic infinite subset is the required one.
Now consider an uncountable regular cardinality of $\Y$.
A version of Dushnik Miller theorem says that for any uncountable cardinal
$\kappa$ for any $f: [\kappa]^2\rightarrow\{0,1\}$ there is
either an infinite $0$-monochromatic set or a $1$-monochromatic subset
of cardinality $\kappa$ (Problem 24.13 of \cite{komjath}). So apply this to $c$ and use the fact that
$(\ell_1([0,1]), \|\ \|_T)$ has no infinite equilateral sets by Theorem \ref{infinite}.
A $1$-monochromatic infinite subset is the required one.
\end{proof}

\section{Renormings induced by injective separable range operators}

In this section we prove that a substantial part of the property of Corollary \ref{dushnik} holds 
for much bigger class of renormings of $\ell_1([0,1])$ than
those  considered in Theorem \ref{infinite}. 
\vskip 6pt

\noindent{\bf Theorem 3.}
{\it Suppose that $\X$ is a Banach space and  $T:\ell_1([0,1])\rightarrow \X$ is injective 
and has separable range and $r>0$.
Then $(\ell_1([0,1]), \|\ \|_T)$
 has the following property: Any $r$-separated subset  $\Y\subseteq \ell_1([0,1])$ of regular uncountable cardinality has a subset $\Z\subseteq \Y$ of the same cardinality which is $(r+)$-separated.
In particular $(\ell_1([0,1]), \|\ \|_T)$ does not admit any uncountable equilateral set..}

\begin{proof}

Let $\{d_n: n\in \N\}$ be a dense subset of the range of $T$.
Suppose that $\kappa$ is a regular uncountable cardinal
and  $\Y=\{x_\alpha: \alpha<\kappa\}\subseteq \ell_1([0,1])$ is $r$-separated in the norm
$\|\ \|_T$.
As the supports of $x_\alpha$s are countable, their union has cardinality at most $\kappa$.
So we may assume that $\Y\subseteq \ell_1(A)$ for some $A\subseteq[0,1]$ of cardinality  $\kappa$.
Let $A=\{t_\xi: \xi<\kappa\}$.

Now we need to define certain function $M$. The domain of $M$ will consist of $5$-tuples
of the form 
$(\varepsilon, q,  F, s, n)$, where $\varepsilon, q>0$ are rationals, $F$ is a finite subset 
of $[0,1]$, $s\in \Q^F$ and $n\in\N$.
Note that there are only countably many such $\varepsilon, q, n$ and given a finite 
$F\subseteq [0,1]$
there are only countably many choices for $s\in \Q^F$. 

The function $M$ will assume values in $\kappa$. It is defined as follows: if there is $\alpha<\kappa$
such that 
\begin{enumerate}[(a)]
\item 
 $\|d_n-T(x_{\alpha})\|_\X<\varepsilon/10,$
\item 
$|\|x_{\alpha}|([0,1]\setminus F)\|_1-q|<2\varepsilon/10$,
\item 
$\|s-(x_{\alpha}|F)\|_1<\varepsilon/10$,
\end{enumerate}
then we choose minimal such $\alpha$ and define $M(\varepsilon, q,  F, s, n)$ as the minimal
ordinal less than $\kappa$ such that $supp(x_\alpha)\subseteq 
\{t_\xi: \xi<M(\varepsilon, q,  F, s, n)\}$.
If there is no such $\alpha$, then we define $M(\varepsilon, q,  F, s, n)$ anyhow.

\begin{claim} 
$$C=\{\delta<\kappa: \forall  (\varepsilon, q,  F, s, n)\in dom(M) 
[F\subseteq\{t_\xi:\xi<\delta\}\ \Rightarrow \ M(\varepsilon, q,  F, s, n)<\delta]\}$$
is unbounded in $\kappa$
\end{claim}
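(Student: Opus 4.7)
The plan is a standard closure (or ``closing-off'') argument exploiting the regularity and uncountability of $\kappa$, plus the fact that the relevant data in $\dom(M)$ over any initial segment is small.

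First I would observe the counting fact underlying the argument. Fix any ordinal $\beta<\kappa$. The tuples $(\varepsilon,q,F,s,n)\in\dom(M)$ with $F\subseteq\{t_\xi:\xi<\beta\}$ are parameterized by: rationals $\varepsilon,q>0$ (countably many), $n\in\N$ (countably many), a finite subset $F$ of a set of size $|\beta|$ (at most $\max(|\beta|,\omega)$ choices), and $s\in\Q^F$ (countably many choices once $F$ is fixed). Hence the total number of such tuples is at most $\max(|\beta|,\omega)$, which is strictly less than $\kappa$ because $\kappa$ is regular and uncountable. In particular, the set
$$S(\beta)\;=\;\{\,M(\varepsilon,q,F,s,n)\;:\;(\varepsilon,q,F,s,n)\in\dom(M),\ F\subseteq\{t_\xi:\xi<\beta\}\,\}$$
is a subset of $\kappa$ of cardinality less than $\kappa$, so by regularity $\sup S(\beta)<\kappa$.

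Next I would show $C$ is unbounded. Given any $\alpha_0<\kappa$, recursively define $\beta_0=\alpha_0$ and
$$\beta_{i+1}\;=\;\max\bigl(\beta_i+1,\ \sup S(\beta_i)+1\bigr)\;<\;\kappa$$
for each $i\in\N$, using the previous paragraph. Set $\delta=\sup_{i\in\N}\beta_i$. Since $\kappa$ has uncountable cofinality, $\delta<\kappa$, and clearly $\delta\geq\alpha_0$. To see $\delta\in C$, take any $(\varepsilon,q,F,s,n)\in\dom(M)$ with $F\subseteq\{t_\xi:\xi<\delta\}$; because $F$ is finite and $\{t_\xi:\xi<\delta\}=\bigcup_i\{t_\xi:\xi<\beta_i\}$ is an increasing union, there is some $i$ with $F\subseteq\{t_\xi:\xi<\beta_i\}$, and then $M(\varepsilon,q,F,s,n)\in S(\beta_i)$, whence $M(\varepsilon,q,F,s,n)<\beta_{i+1}\leq\delta$. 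So $\delta\in C$.

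There is no real obstacle here: the entire argument is a routine club/closure construction, and once the counting bound on $|S(\beta)|$ is in place, unboundedness of $C$ (in fact, one could easily upgrade to showing $C$ is a club) drops out immediately from the regularity of $\kappa$ and the finiteness of the $F$'s (which lets any finite $F\subseteq\{t_\xi:\xi<\delta\}$ already sit below some $\beta_i$). The only point to be careful about is not to confuse the two roles of ``$n$'' (the natural-number coordinate of the tuple versus the recursion index), which I handled above by naming the recursion index $i$.
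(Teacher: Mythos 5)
Your proof is correct and takes essentially the same route as the paper: a closing-off recursion building an increasing $\omega$-sequence above the given ordinal, where each step bounds the $M$-values of the fewer-than-$\kappa$ tuples whose $F$ lies below the previous stage, followed by taking the supremum and using the uncountable cofinality of $\kappa$. Your explicit remark that any finite $F\subseteq\{t_\xi:\xi<\delta\}$ already sits below some stage $\beta_i$ is precisely the detail the paper compresses into ``one sees that $\delta$ is in $C$.''
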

\noindent{\it Proof of the Claim.}
Fix  $\delta_0<\kappa$. By recursion define a strictly increasing $(\delta_n)_{n\in \N}$
in $\kappa$ such that  $M(\varepsilon, q,  F, s, n)<\delta_{n+1}$ whenever
$(\varepsilon, q,  F, s, n)$ is in the domain of $M$ and $F\subseteq\delta_n$.
Given $\delta_n$ there are less than $\kappa$ many elements in the domain of $M$
such that $F\subseteq\delta_n$ as there are only less than $\kappa$  such $F$s, so
by the regularity of $\kappa$ the next
$\delta_{n+1}$ can be taken as the supremum of  all the values under $M$ of such elements.
One sees that $\delta=\sup\{\delta_n: n\in \N\}$ is in $C$. As $\delta_0$ was arbitrary, 
this completes the proof of the Claim.

For any $\delta\in C$ and $\alpha<\kappa$ we define
$$x_{\alpha, \delta}=x_{\alpha}|\{t_\xi: \delta\leq \xi<\kappa\}$$

\begin{claim}\label{model} If $x_{\alpha, \delta}\not=0$, then $\|x_{\alpha, \delta}\|_1\geq r/2$ for each $\alpha<\kappa$
and $\delta\in C$.
\end{claim}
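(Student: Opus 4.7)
The plan is to argue by contradiction: assume $x_{\alpha,\delta}\not=0$ but $\|x_{\alpha,\delta}\|_1<r/2$, and produce $\beta\not=\alpha$ in $\kappa$ with $\|x_\alpha-x_\beta\|_T<r$, contradicting $r$-separation. The witness $\beta$ will come from reflection through $C$: I will manufacture a tuple $(\varepsilon,q,F,s,n)\in\dom(M)$ for which $x_\alpha$ itself is a witness to (a)--(c), and then exploit the defining property of $C$ to force the minimal witness selected by $M$ to be supported on $\{t_\xi:\xi<\delta\}$.

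To set this up I would first fix a rational $\varepsilon>0$ small enough that $2\|x_{\alpha,\delta}\|_1+11\varepsilon/10<r$. The crucial choice is a finite $F\subseteq\{t_\xi:\xi<\delta\}$ (not an arbitrary finite subset of $[0,1]$) with $\|x_\alpha|\{t_\xi:\xi<\delta\}-x_\alpha|F\|_1<\varepsilon/20$; this is possible because $x_\alpha\in\ell_1([0,1])$. I would then pick $s\in\Q^F$ with $\|s-x_\alpha|F\|_1<\varepsilon/10$, a positive rational $q$ with $|\|x_\alpha|([0,1]\setminus F)\|_1-q|<\varepsilon/10$, and $n\in\N$ with $\|d_n-T(x_\alpha)\|_\X<\varepsilon/10$; the last uses density of $\{d_n:n\in\N\}$ in the range of $T$. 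Then $x_\alpha$ satisfies (a)--(c) for $(\varepsilon,q,F,s,n)$, so $M(\varepsilon,q,F,s,n)$ is defined from some minimal witness $\beta<\kappa$; since $F\subseteq\{t_\xi:\xi<\delta\}$ and $\delta\in C$, we obtain $supp(x_\beta)\subseteq\{t_\xi:\xi<\delta\}$, whereas $x_{\alpha,\delta}\not=0$ prevents the same inclusion for $x_\alpha$, so $\beta\not=\alpha$.

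To close out I would estimate $\|x_\alpha-x_\beta\|_T$ in three pieces. The $\X$-norm term is at most $2\varepsilon/10$ by the triangle inequality through $d_n$, using (a) for both $\alpha$ and $\beta$. The $\ell_1$-part on $F$ is at most $2\varepsilon/10$ by the triangle inequality through $s$, using (c) for both. The $\ell_1$-part off $F$ is bounded by $\|x_\alpha|([0,1]\setminus F)\|_1+\|x_\beta|([0,1]\setminus F)\|_1<2q+4\varepsilon/10$ by (b); and because $F$ captures $x_\alpha|\{t_\xi:\xi<\delta\}$ up to $\varepsilon/20$, one has $q<\|x_{\alpha,\delta}\|_1+3\varepsilon/20$. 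Summing, $\|x_\alpha-x_\beta\|_T<2\|x_{\alpha,\delta}\|_1+11\varepsilon/10<r$, the desired contradiction.

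The subtlety I expect is precisely the two-step choice of $F$: it must lie inside $\{t_\xi:\xi<\delta\}$ so that the reflection via $C$ applies, and it must simultaneously exhaust $x_\alpha|\{t_\xi:\xi<\delta\}$ in $\ell_1$-norm so that $\|x_\alpha|([0,1]\setminus F)\|_1$ is essentially $\|x_{\alpha,\delta}\|_1$, forcing $q$ to be small. Without that balance the outside-$F$ contribution $2q$ could swamp the estimate. Once this balance is secured, everything else is triangle-inequality bookkeeping mediated by the reflection point $\delta\in C$.
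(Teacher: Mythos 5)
Your proposal is correct and follows essentially the same route as the paper: the same reflection via the tuple $(\varepsilon,q,F,s,n)$ and the set $C$ to produce a witness $x_\beta$ supported in $\{t_\xi:\xi<\delta\}$, the same careful choice of $F\subseteq\{t_\xi:\xi<\delta\}$ exhausting the below-$\delta$ mass of $x_\alpha$, and the same triangle-inequality bookkeeping. The only (immaterial) difference is that you argue by contradiction and only need the upper bound $\|x_\alpha-x_\beta\|_T\leq 2\|x_{\alpha,\delta}\|_1+O(\varepsilon)$, whereas the paper records the two-sided estimate.
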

\noindent{\it Proof of the Claim.}
Fix $\delta\in C$ and $\alpha<\kappa$
such that $x_{\alpha, \delta}\not=0$. 
Fix a rational $\varepsilon>0$. We will show that there is $\beta<\alpha$
such that $|\|x_\beta-x_{\alpha}\|_T-2\|x_{\alpha, \delta}\|_1|<\varepsilon$. As 
$\{x_\alpha: \alpha<\kappa\}$ is an $r$-separated set and $\varepsilon>0$ is arbitrary, this 
is sufficient.
Find 
\begin{enumerate}
\item $q\in \Q$ such that $|q-\|x_{\alpha, \delta}\|_1|<\varepsilon/10$,
\item $n\in \N$ such that 
 $$\|d_n-T(x_{\alpha})\|_\X<\varepsilon/10,$$
\item finite $F\subseteq\{t_\xi: \xi<\delta\}$ such that
$|\|x_{\alpha}|([0,1]\setminus F)\|_1-q|<2\varepsilon/10$,
\item $s\in \Q^F$ such that 
$\|s-(x_{\alpha}|F)\|_1<\varepsilon/10$,
\end{enumerate}
So $x_{\alpha}$ satisfies the following formulas when substituted in place of $x$;
\begin{enumerate}
\item[(5)] 
 $\|d_n-T(x)\|_\X<\varepsilon/10,$
\item[(6)]
$|\|x|([0,1]\setminus F)\|_1-q|<2\varepsilon/10$,
\item [(7)]
$\|s-(x|F)\|_1<\varepsilon/10$.
\end{enumerate}
As $F\subseteq\{t_\xi: \xi<\delta\}$ since $\delta\in C$, we have $M(\varepsilon, q,  F, s, n)<\delta$.
As there is $x_{\gamma}$ (for $\gamma=\alpha$) which satisfies (5) - (7) when substituted for $x$,
by the definition of $M$ 
there is $\beta$  such that $x_\beta$ satisfies (5) -(7) when substituted for $x$  and $x_\beta$
has its support included in $\{t_\xi: \xi<M(\varepsilon, q,  F, s, n)\}$ and in particular in 
$\{t_\xi: \xi<\delta\}$.

Now we estimate $\|x_\beta-x_{\alpha}\|_T$:
$$\|T(x_\beta-x_{\alpha})\|_\X=\|T(x_\beta)-T(x_{\alpha})\|_\X\leq \|T(x_\beta)-d_n\|_\X
+\|d_n-T(x_{\alpha})\|_\X
\leq 2\varepsilon/10\leqno(8)$$
by (2) and (5) for $x_\beta$ in place of $x$.
$$\|x_\beta-x_{\alpha}\|_1=\|x_\beta|F-x_{\alpha}|F\|_1+
\|x_\beta|(\{t_\xi: \xi<\delta\}\setminus F)-x_{\alpha}|(\{t_\xi: \xi<\delta\}\setminus F)\|_1
+\|x_{\alpha, \delta}\|_1\leqno(9)$$
since the support of $x_\beta$ is included in $\{t_\xi: \xi<\delta\}$.
Conditions (4) and  (7) for $x_\beta$ in place of $x$ imply that
$$\|x_\beta|F-x_{\alpha}|F\|_1\leq 2\varepsilon/10.\leqno (10)$$
Conditions (1) and (3) imply that 
$$\|x_{\alpha}|(\{t_\xi: \xi<\delta\}\setminus F)\|_1< 3\varepsilon/10$$
and so by (6) for $x_\beta$ in place of $x$ and the fact that the support of $x_\beta$ is included in $\{t_\xi: \xi<\delta\}$
we conclude that
$$q-5\varepsilon/10\leq \|x_\beta|(\{t_\xi: \xi<\delta\}\setminus F)
-x_{\alpha}|(\{t_\xi: \xi<\delta\}\setminus F)\|_1\leq q+5\varepsilon/10,\leqno(11)$$
so by (1)  and (8) - (11) we conclude that
$$2\|x_{\alpha, \delta}\|_1-6\varepsilon/10\leq \|x_\beta
-x_{\alpha}\|_T\leq 2\|x_{\alpha, \delta}\|_1+\varepsilon,$$
which completes the proof of the Claim.

Now note that 
as $\{x_\alpha: \alpha<\kappa\}$ is discrete, it cannot be contained 
in any subspace of the form $\ell_1(\{t_\xi: \xi<\delta\})$ for $\delta<\kappa$ which has
density less than $\kappa$. So for  $\delta\in C$ we can find  $\alpha_\delta<\kappa$
such that $x_{\alpha_\delta, \delta}\not=0$ and moreover we may make sure that
$\alpha_\delta\not=\alpha_{\delta'}$  for any $\delta<\delta'$ in $C$.
Next we find $C'\subseteq C$ of cardinality $\kappa$ such that 
$$supp(x_{\alpha_{\delta'}, {\delta'}})\subseteq\{t_\xi: \delta'\leq \xi<\delta\}\leqno (12)$$
for any $\delta'<\delta$ and $\delta, \delta'\in C'$.
This can be done by recursion  taking at the inductive step the next $\delta\in C$ such that
the supports of the previous $x_{\alpha_{\delta'}, \delta'}$s are included
in $\{t_\xi: \xi<\delta\}$.

Now we will consider two cases, the first when there is $\theta\in C'$ such
 that for any $\delta, \delta'\in C'$ such that 
 $\theta<\delta'<\delta$ we have 
 $$supp(x_{\alpha_{\delta'},\delta'})\cap supp(x_{\alpha_{\delta}})=\emptyset.$$
 Then  by Claim \ref{model} we have 
 $$\|x_{\alpha_{\delta}}-x_{\alpha_{\delta'}}\|_1\geq 
 \|x_{\alpha_{\delta'},\delta'}\|_1+\|x_{\alpha_{\delta},\delta}\|_1\geq r$$
 for every $\theta<\delta'<\delta<\kappa$. Since $T$ is injective
  $\|x_{\alpha_{\delta}}-x_{\alpha_{\delta'}}\|_T>\|x_{\alpha_{\delta}}-x_{\alpha_{\delta'}}\|_1\geq r$,
  so we obtain that $\Z=\{x_{\alpha_\delta}: \delta\in C', \ \delta>\theta\}$.
  
  The second case is when there is no $\theta\in C'$ as in the first case. Then 
  by recursion we can construct $C''\subseteq C'$ and $(\theta_\delta: \delta\in C'')$
  such that 
  $$sup\{\xi: t_\xi\in supp(x_{\alpha_{\delta'}})\}< \theta_\delta<\delta, \ \ x_{\alpha_{\delta}}(t_{\theta_\delta})\not=0\leqno (13)$$
  for every $\delta'<\delta$ with $\delta, \delta'\in C''$. 
  Indeed, having constructed less then $\kappa$ elements $\delta'\in C''$ whose supremum is $\theta< \kappa$
  we consider all $\delta\in C'$ which are above $\theta$. Since there is no $\theta\in C'$ as
  in the first case we find $\theta<\delta''<\delta$ with $\delta'', \delta\in C'$ such that
  $supp(x_{\alpha_{\delta''},\delta''})\cap supp(x_{\alpha_{\delta}})\not=\emptyset$.
  To complete the recursion take $\delta$ as the next element of
  $C''$ and take as $t_{\theta_\delta}$  any element of the above intersection.
  
  Since
  $\kappa$ is a  cardinal of uncountable cofinality, by passing to 
  a subset of cardinality $\kappa$ we may assume that there is $\varepsilon>0$ 
  such that $|x_{\alpha_{\delta}}(t_{\theta_\delta})|\geq\varepsilon$ for every $\delta\in C''$.
  
  Now we will use the following version of the Dushnik-Miller theorem: if
  $\kappa$ is a regular uncountable cardinal and  $c:[\kappa]^2\rightarrow\{0,1\}$,
  then either there is a $0$-monochromatic set for $c$ which has its order type equal to $\omega+1$
  or there a $1$-monochromatic subset of cardinality $\kappa$ for $c$ (Problem 24.32 of \cite{komjath}).
  We define $c: [C'']^2\rightarrow\{0,1\}$ by $c(\delta', \delta)=0$ for $\delta'<\delta$
  if and only if 
  $$\|x_{\alpha_{\delta}}-x_{\alpha_{\delta'}}\|_T=r.$$
  So it is enough to prove that there is no $0$-monochromatic set of order type $\omega+1$.
  Suppose $\{\delta_n: n\in \N\}\cup\{\delta_\omega\}$ forms such a set, where
  $\delta_\xi<\delta_\eta$ if $\xi<\eta$ for all $\xi, \eta< \omega+1$. 
  Since the supports of $x_{\alpha_{\delta},\delta}$s for $\delta\in C''$
  are pairwise disjoint by (12) there is $n\in \N$ such that 
  $$\|x_{\alpha_{\delta_\omega}}| supp(x_{\alpha_{\delta_n},\delta_n})\|_1\leq \varepsilon/2.$$
  Also by (13) we have  $t_{\theta_{\delta_\omega}}\not\in supp(x_{\alpha_{\delta_n}, \delta_n})\cup 
  supp(x_{\alpha_{\delta_\omega}, \delta_\omega})$, where the union is disjoint by (12). 
  So by Claim \ref{model} we have 
  $$\|x_{\alpha_{\delta_\omega}}-x_{\alpha_{\delta_n}}\|_T>
  \|x_{\alpha_{\delta_\omega}}-x_{\alpha_{\delta_n}}\|_1\geq
  \|x_{\alpha_{\delta_\omega}, \delta_\omega}\|_1+|x_{\alpha_{\delta_\omega}} 
  (t_{\theta_{\delta_\omega}})|+\|x_{\alpha_{\delta_n}, \delta_n}\|_1-\varepsilon/2
  \geq r$$
  which contradicts the choice of $\{\delta_n: n\in \N\}\cup\{\delta_\omega\}$ as $0$-monochromatic.
  So the set of vectors $x_{\alpha_\delta}$, for $\delta$ in the $1$-monochromatic set of cardinality 
  $\kappa$, is the desired $\Z$.
\end{proof}

\begin{remark}\label{plus-id}
Some of the renormings considered in Theorem \ref{uncountable} admit many infinite
equilateral sets. For example we can identify $\ell_1([0,1])$ with
$\ell_1\oplus\ell_1([0,1])$ and define $T': \ell_1\oplus\ell_1([0,1])\rightarrow \ell_1\oplus\ell_2$
by $T'(x, y)=(x, T(y))$, where $T$ is as in Lemma \ref{operators}. 
\end{remark}

\begin{remark}\label{sierpinski} Let us remark that the property of the spaces from Theorem \ref{uncountable} is much stronger
than not admitting uncountable equilateral sets. To see this consider
$\ell_\infty([[0,1]]^2)$  with the usual $\|\ \|_\infty$ norm. Let 
$c:[[0,1]]^2\rightarrow \{0, 1\}$ be Sierpi\'nski's colouring with no
uncountable monochromatic subset (Problem 24.23 of \cite{komjath}). 
So $c\in \ell_\infty([[0,1]]^2)$ and consider $f_t\in \ell_\infty([[0,1]]^2)$
defined by 

$$
  f_t(\{r,s\}) =
  \begin{cases}
    c(\{r, s\}) & \text{if $t=\min(\{r,s\})$} \\
     -c(\{r, s\}) & \text{if $t=\max(\{r,s\})$} \\
    0 & \text{otherwise.}
  \end{cases}
$$
For distinct $t, t'\in [0,1]$ the intersection of supports of $f_t$ and $f_{t'}$ 
is included in $\{\{t, t'\}\}$. For $t<t'$ the value of $f_t-f_{t'}$ at $\{t, t'\}$
is $1-(-1)=2$ if $c(\{t, t'\})=1$ and it is $0$ otherwise, so
$$
  \|f_t-f_{t'}\| =
  \begin{cases}
    1 & \text{if $c(\{t,t'\})=0$} \\
    2 & \text{if $c(\{t,t'\})=1$.}
  \end{cases}
$$
This  means that $\{f_t: t\in [0,1]\}$  is a $1$-separated subset of
the unit sphere which does not admit any  uncountable
equilateral subset but also there is  no uncountable subset
which is $(1+)$-separated. Note that any countable
$1$-separated set $\{x_n: n\in \N\}$ in any Banach space, 
 contains either an infinite $1$-equilateral set or an infinite
 $(1+)$-separated set by Ramsey's theorem (Problem 24.1 of \cite{komjath}).
\end{remark}

\bibliographystyle{amsplain}

\end{document}